\theoremstyle{plain}
\newtheorem{Thm}{Theorem}
\newtheorem{Lem}{Lemma}
\newtheorem{Conj}{Conjecture}
\theoremstyle{definition}
\newtheorem*{Ack}{Acknowledgment}
\theoremstyle{remark}
\def\Q{\mathbb Q}
\def\1{{\bf 1}}
\def\pmod #1{\ ({\rm{mod}}\ #1)}
\def\qbinom #1#2#3{{\genfrac{[}{]}{0pt}{}{#1}{#2}}_{#3}}
\begin{document}
\title{On divisibility of sums of Ap\'ery polynomials}
\author{Hao Pan}
\begin{abstract} For any positive integers $m$ and $\alpha$, we prove that
$$\sum_{k=0}^{n-1}\epsilon^k(2k+1)A_k^{(\alpha)}(x)^m\equiv0\pmod{n},
$$
where $\epsilon\in\{1,-1\}$ and
$$
A_n^{(\alpha)}(x)=\sum_{k=0}^n\binom{n}{k}^{\alpha}\binom{n+k}{k}^{\alpha}x^k.$$
\end{abstract}
\address{Department of Mathematics, Nanjing University, Nanjing 210093,
People's Republic of China}\email{haopan79@yahoo.com.cn}
\subjclass[2010]{Primary
11A07; Secondary 11B65, 11B83, 05A10}\keywords{Ap\'ery polynomial, $q$-Lucas congruence,
}\thanks{}
\maketitle

\section{Introduction}
\setcounter{Lem}{0}\setcounter{Thm}{0}\setcounter{Cor}{0}
\setcounter{equation}{0}
The Ap\'ery number $A_n$ is defined by
$$
A_n=\sum_{k=0}^n\binom{n}{k}^2\binom{n+k}{k}^2.
$$
Those numbers play an important role in  Ap\'ery's ingredient proof \cite{Poorten78} of the irrationality of $\zeta(3)=\sum_{n=1}^\infty1/n^3$. In 2000, Ahlgren and Ono \cite{AhlgrenOno00} solved a conjecture of Beukers \cite{Beukers87} and showed that for odd prime $p$, $$A_{(p-1)/2}\equiv a(p)\pmod{p^2},$$ where $a(n)$ is the Fourier coefficient of $q^n$ in the modular form $\eta(2z)^4\eta(4z)^4$.

Recently, Sun \cite{Sun1} defined the Ap\'ery polynomial as
$$
A_n(x)=\sum_{k=0}^n\binom{n}{k}^2\binom{n+k}{k}^2x^k,$$
and proved several new congruences for the sums of $A_n(x)$.
For examples,
$$
\sum_{k=0}^{n-1}(2k+1)A_k(x)\equiv0\pmod{n}
$$
for every positive integer $n$. In fact, he showed that
$$
\frac1n\sum_{k=0}^{n-1}(2k+1)A_k(x)=\sum_{k=0}^{n-1}\binom{n-1}{k}\binom{n+k}k\binom{n+k}{2k+1}\binom{2k}kx^k.
$$
Furthermore, Sun proposed the following conjecture.
\begin{Conj} For $m\in\{1,2,3,\ldots\}$,
\begin{equation}
\sum_{k=0}^{n-1}\epsilon^k(2k+1)A_k(x)^m\equiv0\pmod{n},
\end{equation}
where $\epsilon\in\{1,-1\}$.
\end{Conj}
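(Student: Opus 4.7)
My approach is to establish a $q$-analog of the divisibility, from which the theorem follows by specialization $q=1$; this is the strategy suggested by the paper's keyword ``$q$-Lucas congruence.'' The first step is to introduce a suitable $q$-analog of the Ap\'ery polynomial,
$$
\mathcal{A}_n^{(\alpha)}(x;q) := \sum_{k=0}^n \qbinom{n}{k}{q}^{\alpha}\qbinom{n+k}{k}{q}^{\alpha} q^{c(n,k,\alpha)} x^k,
$$
for an exponent $c(n,k,\alpha)$ to be chosen so that $\mathcal{A}_n^{(\alpha)}(x;q)$ has a clean reduction modulo the cyclotomic polynomials $\Phi_d(q)$. I then consider the $q$-sum
$$
T_{n,m}^{(\alpha)}(x;q) := \sum_{k=0}^{n-1} \epsilon^k\, q^{e(k)}\, [2k+1]_q\, \mathcal{A}_k^{(\alpha)}(x;q)^m,
$$
where $[r]_q = (1-q^r)/(1-q)$, and aim to show that $[n]_q = \prod_{d\mid n,\, d>1}\Phi_d(q)$ divides $T_{n,m}^{(\alpha)}(x;q)$ as a polynomial in $q$ and $x$. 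Since $[n]_q|_{q=1} = n$ and $\mathcal{A}_k^{(\alpha)}(x;1) = A_k^{(\alpha)}(x)$, evaluating at $q=1$ yields the theorem.

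The heart of the argument is to establish the divisibility $\Phi_d(q) \mid T_{n,m}^{(\alpha)}(x;q)$ for each $d>1$ with $d \mid n$. For this I rely on a $q$-Lucas type congruence: writing $k = ad+r$ and $j = bd+s$ with $0 \le r, s < d$, one has
$$
\qbinom{k}{j}{q} \equiv \binom{a}{b}\,\qbinom{r}{s}{q} \pmod{\Phi_d(q)},
$$
together with an analogous identity for $\qbinom{k+j}{j}{q}$. Applied inside the defining sum of $\mathcal{A}_k^{(\alpha)}(x;q)$, and combined with the right choice of $c(n,k,\alpha)$, this produces a multiplicative reduction of the form $\mathcal{A}_{ad+r}^{(\alpha)}(x;q) \equiv F_a(x;q)\cdot \mathcal{A}_r^{(\alpha)}(x;q) \pmod{\Phi_d(q)}$ for an explicit factor $F_a(x;q)$. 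Meanwhile $[2k+1]_q$ modulo $\Phi_d(q)$ depends only on the residue of $k$ mod $d$, and is itself divisible by $\Phi_d(q)$ precisely when $d \mid 2k+1$. The sum then splits: the terms with $d \mid 2k+1$ vanish modulo $\Phi_d(q)$, while the remaining terms may be grouped by the residue of $k$ modulo $d$, leaving an inner sum over $a$ which I expect to collapse via a sign-reversing involution, the natural candidate being $k \leftrightarrow n-1-k$ (possibly combined with $r \leftrightarrow d-1-r$).

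The main technical obstacle is twofold. First, one must pin down the exponents $c(n,k,\alpha)$ and $e(k)$ so that the $q$-powers emerging from the $q$-Lucas step align cleanly under the intended pairing, and the inner sum for each residue class is genuinely zero modulo $\Phi_d(q)$. Second, both signs $\epsilon = \pm 1$ must be handled uniformly: for $\epsilon = -1$ the parity of $k$ interacts non-trivially with the $q$-power discrepancies produced by the $q$-Lucas reduction, and these must be shown to conspire in the right way for the involution to be sign-reversing. Once the pairing is made to work, each residue class contributes zero modulo $\Phi_d(q)$, yielding divisibility by $\Phi_d(q)$; taking the product over all divisors $d\mid n$ with $d>1$ gives divisibility by $[n]_q$, and hence, at $q=1$, by $n$.
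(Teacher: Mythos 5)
Your plan coincides in outline with the paper's proof of the case $\epsilon=1$: introduce a $q$-Ap\'ery polynomial, reduce modulo $\Phi_d(q)$ for each divisor $d>1$ of $n$ via $q$-Lucas, and cancel through the involution $k\leftrightarrow n-1-k$ (which indeed acts on residues by $b\mapsto d-1-b$). But as written it stops exactly where the work is: the exponents $c(n,k,\alpha)$ and $e(k)$ are never determined, and the cancellation is only ``expected.'' For the record, the choices that make it work are $c=\binom{j}{2}-jk$ and $e(k)=n-1-k$; one then rewrites $\qbinom{k+j}{j}{q}=(-1)^jq^{jk+\binom{j+1}{2}}\qbinom{-k-1}{j}{q}$ so that $q$-Lucas applies with upper entry $-k-1=(-a-1)d+(d-1-b)$ (this sidesteps the carry problem $r+s\ge d$ in your ``analogous identity'' for $\qbinom{k+j}{j}{q}$, although those carry terms would in fact vanish since $\qbinom{r+s-d}{s}{q}=0$), and the leftover $q$-power becomes $q^{\alpha j^2}$, which depends only on $j\bmod d$ modulo $\Phi_d(q)$. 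The reduced polynomial $B_{a,b,d}$ is symmetric under $b\mapsto d-1-b$, and the pairing kills the sum because $q^{-1-b}[2b+1]_q+q^{b}[-2b-1]_q=0$; the mechanism is this cross-residue pairing, not a vanishing of the inner sum over $a$ within one residue class.

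The genuine gap is the case $\epsilon=-1$, which you propose to handle ``uniformly'' with the same modulus $[n]_q$ and merely flag as a difficulty. It cannot be handled that way. If $d$ is an even divisor of $n$, then $(-1)^k=(-1)^b$ depends only on $b=k\bmod d$, and under $k\mapsto n-1-k$ the sign flips, so the paired coefficients combine to $\pm\bigl(q^{-1-b}[2b+1]_q-q^{b}[-2b-1]_q\bigr)$, which is not divisible by $\Phi_d(q)$; no adjustment of $e(k)$ or $c(n,k,\alpha)$ alone repairs this, because the obstruction is a sign rather than a misaligned power of $q$. The paper's extra idea is to change the $q$-analog for this case: it works with $A_k^{(\alpha)}(x;q^2)$ and proves divisibility by $\prod_{d\mid n,\ d>1\ \mathrm{odd}}\Phi_d(q)\cdot\prod_{d\mid n,\ d\ \mathrm{even}}\Phi_d(q^2)$ — a polynomial that is not $[n]_q$ but still specializes to $n$ at $q=1$. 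For even $d$ one has $\Phi_d(q^2)=\Phi_{2d}(q)$ and $q^{d}\equiv-1\pmod{\Phi_{2d}(q)}$, so the troublesome sign is absorbed into a power of $q$ and the same pairing goes through. Without some such modification (or another mechanism for even $d$), your proposal proves only the $\epsilon=1$ half, and the $\epsilon=-1$ half for odd $n$.
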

In \cite{GuoZeng}, Guo and Zeng proved that
$$
\frac1n\sum_{k=0}^{n-1}(-1)^k(2k+1)A_k(x)=(-1)^{n-1}\sum_{k=0}^{n-1}\binom{2k}kx^k\sum_{j=0}^k\binom{k}{j}\binom{k+j}j\binom{n-1}{k+j}\binom{n+k+j}{k+j}.
$$
On the other hand, in \cite{Sun}, Sun also define the central Delannoy polynomial
$$
D_n(x)=\sum_{k=0}^n\binom{n}{k}\binom{n+k}{k}x^k.
$$
He showed that
$$
\frac1n\sum_{k=0}^{n-1}(2k+1)D_k(x)=\sum_{k=0}^{n-1}\binom{n}{k+1}\binom{n+k}{k}x^k.
$$
Sun also conjectured that
$$
\frac1n\sum_{k=0}^{n-1}(2k+1)D_k(x)^m
$$
is always an integer.

In fact, motivated by \cite{Schmidt93} and \cite[eq. (1.7)]{AhlgrenOno00}, we may define the generalized Ap\'ery polynomial
$$
A_n^{(\alpha)}(x)=\sum_{k=0}^n\binom{n}{k}^{\alpha}\binom{n+k}{k}^{\alpha}x^k,$$
where $\alpha$ is a positive integer. (In \cite{GuoZeng}, Guo and Zeng called such polynomial as the Schmidt polynomial.) In the same paper, Guo and Zeng also proved that fact all $\alpha\geq 1$, there exist explicit formulas for
$$
\frac1n\sum_{k=0}^{n-1}(2k+1)A_k^{(\alpha)}(x)\text{\qquad and\qquad}\frac1n\sum_{k=0}^{n-1}(-1)^k(2k+1)A_k^{(\alpha)}(x).
$$
However, no explicit formula is known for
$$\frac1n\sum_{k=0}^{n-1}(2k+1)A_k^{(\alpha)}(x)^m\text{\qquad and\qquad}\frac1n\sum_{k=0}^{n-1}(-1)^k(2k+1)A_k^{(\alpha)}(x)^m
$$
when $m\geq 2$.

In this paper, we shall prove
\begin{Thm} For any positive integers $m$ and $\alpha$,
\begin{equation}\label{t1e1}
\sum_{k=0}^{n-1}(2k+1)A_k^{(\alpha)}(x)^m\equiv0\pmod{n},
\end{equation}
and
\begin{equation}\label{t1e2}
\sum_{k=0}^{n-1}(-1)^k(2k+1)A_k^{(\alpha)}(x)^m\equiv0\pmod{n}.
\end{equation}
\end{Thm}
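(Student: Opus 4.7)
The plan is to derive both \eqref{t1e1} and \eqref{t1e2} from stronger $q$-analogue congruences via D\'esarm\'enien's $q$-Lucas theorem. I introduce the $q$-Ap\'ery polynomial
$$\mathcal{A}_n^{(\alpha)}(x;q)=\sum_{k=0}^n q^{-\alpha k(n-k)}\qbinom{n}{k}{q}^{\alpha}\qbinom{n+k}{k}{q}^{\alpha}x^k,$$
which reduces to $A_n^{(\alpha)}(x)$ at $q=1$, and a $q$-analogue $c_k^{\epsilon}(q)$ of $2k+1$ adapted to $\epsilon$ (for $\epsilon=1$ the natural choice is $q^{-k}[2k+1]_q$). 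The aim is to prove
$$[n]_q\Bigm|\sum_{k=0}^{n-1}\epsilon^k c_k^{\epsilon}(q)\,\mathcal{A}_k^{(\alpha)}(x;q)^m$$
in $\Z[q^{\pm 1},x]$; specializing to $q=1$ then yields the theorem, since $[n]_q|_{q=1}=n$ and $c_k^{\epsilon}(1)=2k+1$. Because $[n]_q=\prod_{1<d\mid n}\Phi_d(q)$, it suffices to show that the $q$-sum vanishes at every primitive $d$-th root of unity $\zeta$ with $1<d\mid n$.

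D\'esarm\'enien's congruence $\qbinom{n}{k}{q}\equiv\binom{\lfloor n/d\rfloor}{\lfloor k/d\rfloor}\qbinom{n\bmod d}{k\bmod d}{q}\pmod{\Phi_d(q)}$, applied to both Gaussian factors in $\mathcal{A}_k^{(\alpha)}$, will reduce $\mathcal{A}_{dj+s}^{(\alpha)}$ (with $k=dj+s$, $\ell=di+t$, $0\le s,t<d$) cleanly: the carry terms with $s+t\ge d$ drop out because $\Phi_d(q)\mid\qbinom{s+t}{t}{q}$ there, and the exponent $-\alpha\ell(k-\ell)$ collapses to $-\alpha t(s-t)$ modulo $d$. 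This should yield the factorization
$$\mathcal{A}_{dj+s}^{(\alpha)}(x;\zeta)=A_j^{(\alpha)}(x^d)\cdot\mathcal{A}_s^{(\alpha)}(x;\zeta).$$
Since $c_k^{\epsilon}(\zeta)$ depends only on $k\bmod d$, the $q$-sum at $\zeta$ then factorizes into an outer $j$-sum times the inner $s$-sum $\sum_{s=0}^{d-1}\epsilon^s c_s^{\epsilon}(\zeta)\,\mathcal{A}_s^{(\alpha)}(x;\zeta)^m$, reducing the problem to proving vanishing of the inner sum.

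For this I pair $s\leftrightarrow d-1-s$. A short computation using $[d-j]_\zeta=-\zeta^{-j}[j]_\zeta$ gives
$$\qbinom{d-1-s}{t}{\zeta}\qbinom{d-1-s+t}{t}{\zeta}=\zeta^{-t(2s+1)}\qbinom{s}{t}{\zeta}\qbinom{s+t}{t}{\zeta},$$
and the weight $q^{-\alpha k(n-k)}$ was inserted precisely so that the resulting $\zeta^{-\alpha t(2s+1)}$ combines with $\zeta^{-\alpha t(d-1-s-t)}$ to reproduce $\zeta^{-\alpha t(s-t)}$ modulo $\zeta^d=1$; this gives the reflection symmetry $\mathcal{A}_{d-1-s}^{(\alpha)}(x;\zeta)=\mathcal{A}_s^{(\alpha)}(x;\zeta)$. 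Combined with the elementary identity $c_{d-1-s}(\zeta)=-c_s(\zeta)$ (from $c_s(\zeta)=(\zeta^{-s}-\zeta^{s+1})/(1-\zeta)$), pairwise cancellation kills the $s$-sum, settling the case $\epsilon=1$ directly and the case $\epsilon=-1$ when $d$ is odd.

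The main obstacle is the case $\epsilon=-1$ with $d$ even, where the naive pairing produces $(-1)^s-(-1)^{d-1-s}=2(-1)^s\ne 0$; an explicit check for $n=d=4$, $\zeta=i$ already shows that the $q$-sum with $c_k(q)=q^{-k}[2k+1]_q$ does \emph{not} vanish there, even though \eqref{t1e2} still holds integrally. Closing this case will therefore require either a different weight $c_k^-(q)$ (the $(-q)$-deformation $(-q)^{-k}[2k+1]_{-q}$ is the natural candidate), or an alternate pairing $s\leftrightarrow s+d/2$ exploiting a secondary $(d/2)$-shift symmetry of $\mathcal{A}_s^{(\alpha)}(x;\zeta)$, or even an $\epsilon$-dependent replacement of $\mathcal{A}$ itself. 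I expect the bulk of the technical work to concentrate precisely here.
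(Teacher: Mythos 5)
Your reduction of (\ref{t1e1}) --- and of (\ref{t1e2}) at the odd divisors $d$ of $n$ --- is sound, and it is essentially the paper's mechanism in a different packaging: the paper also works modulo $\Phi_d(q)$ for each $d\mid n$, $d>1$, via the $q$-Lucas congruence, but instead of factorizing the sum at a root of unity it pairs $k\leftrightarrow n-1-k$ in the whole sum, using that after $q$-Lucas the residue $b=k\bmod d$ enters only through $\qbinom{b}{t}{q}^\alpha\qbinom{d-b-1}{t}{q}^\alpha$ (hence is invariant under $b\mapsto d-1-b$) together with the antisymmetry $q^{-1-b}[2b+1]_q+q^{b}[-2b-1]_q=0$. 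That invariance is obtained there from $\qbinom{k+j}{j}{q}=(-1)^jq^{jk+\binom{j+1}{2}}\qbinom{-k-1}{j}{q}$, and it is the same reflection symmetry you derive by hand for $\qbinom{d-1-s}{t}{\zeta}\qbinom{d-1-s+t}{t}{\zeta}$; the factorization $\mathcal{A}_{dj+s}=A_j^{(\alpha)}(x^d)\mathcal{A}_s$ is a harmless extra step.

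The genuine gap is exactly where you locate it: $\epsilon=-1$ with $d$ even, i.e.\ (\ref{t1e2}) for even $n$, and it cannot be closed by re-pairing, because the base-$q$ statement you set out to prove is simply false there. Already for $n=d=2$, $\zeta=-1$ (any $\alpha$, $m$) one has $\mathcal{A}_0=\mathcal{A}_1(x;\zeta)=1$ and $c_1(\zeta)=\zeta^{-1}[3]_\zeta=-1$, so your sum equals $2\not\equiv0\pmod{\Phi_2(q)}$, confirming your own $n=4$ check. A different $q$-statement must be proved, and this is precisely the paper's key move for (\ref{t1e2}): it shows that $\sum_{k=0}^{n-1}(-1)^kq^{n-1-k}[2k+1]_qA_k^{(\alpha)}(x;q^2)^m$ --- note $q^2$ inside the $q$-Ap\'ery polynomial while the weight stays in base $q$ --- is divisible by $\prod_{d\mid n,\,d>1\text{ odd}}\Phi_d(q)\cdot\prod_{d\mid n,\,d\text{ even}}\Phi_d(q^2)$, a product which still specializes to $n$ at $q=1$. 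For even $d$ one works modulo $\Phi_d(q^2)=\Phi_{2d}(q)$, where $q^{d}\equiv-1$; in the pairing $b\mapsto d-1-b$ the reflected term then acquires an extra factor $q^{-d}\equiv-1$ that exactly cancels the sign mismatch $(-1)^{b}$ versus $(-1)^{d-1-b}$ which defeats your argument. Your proposed $(-q)$-deformation gestures in this direction but is unverified, so as written your proposal establishes (\ref{t1e1}) in full but (\ref{t1e2}) only for odd $n$.
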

In the next sections, we shall use $q$-congruences to prove (\ref{t1e1}) and (\ref{t1e2}) respectively.

\section{Proof of (\ref{t1e1})}
\setcounter{Lem}{0}\setcounter{Thm}{0}\setcounter{Cor}{0}
\setcounter{equation}{0}
For an integer $n$, define the $q$-integer
$$
[n]_q=\frac{1-q^n}{1-q}.
$$
Clearly $\lim_{q\to 1}[n]_q=n$.
For a non-negative integer $k$, the $q$-binomial coefficient $\qbinom{n}{k}q$ is given by
$$
\qbinom{n}{k}q=\frac{\prod_{1\leq j\leq k}[n-j+1]_q}{\prod_{1\leq j\leq k}[j]_q}.
$$
In particular, $\qbinom{n}{0}q=1$. Also, we set $\qbinom{n}{k}q=0$ if $k<0$. It is easy to see that$\qbinom{n}{k}q$ is a polynomial in $q$ with integral coefficients, since
$$
\qbinom{n+1}{k}q=q^k\qbinom{n}{k}q+\qbinom{n}{k-1}q.
$$

Below we introduce the notion of $q$-congruences. Suppose that $a,b,n$ are integers and $a\equiv b\pmod{n}$.  Then over the polynomial ring $\Q(q)$, we have
$$
\frac{1-q^a}{1-q}-\frac{1-q^b}{1-q}=q^a\cdot\frac{1-q^{b-a}}{1-q}\equiv 0\pmod{\frac{1-q^n}{1-q}},
$$
i.e., $[a]_q\equiv [b]_q\pmod{[n]_q}$. Furthermore, for the $q$-binomial coefficients, we have the following  $q$-Lucas congruence.
\begin{Lem}\label{qLucas} Suppose that $d>1$ is a positive integer. Suppose that $a,b,h,l$ are integers with $0\leq b,l\leq  d-1$. Then
$$
\qbinom{ad+b}{hd+l}q\equiv\binom{a}{h}\qbinom{b}{l}q\pmod{\Phi_d(q)},
$$
where $\Phi_d(q)$ is the $d$-th cyclotomic polynomial.
\end{Lem}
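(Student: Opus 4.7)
The plan is to reduce the $q$-binomial theorem
$$\prod_{j=0}^{N-1}(1+zq^j)=\sum_{k=0}^{N}q^{\binom{k}{2}}\qbinom{N}{k}q\,z^k$$
modulo $\Phi_d(q)$, with $N=ad+b$, and then match coefficients of $z^{hd+l}$. The key preliminary facts I would isolate first are: $\Phi_d(q)\mid q^d-1$, so $q^d\equiv 1\pmod{\Phi_d(q)}$ and hence $q^{id+r}\equiv q^r$ for any integers $i,r$; and when $d$ is even, $q^{d/2}$ is a square root of $1$ distinct from $1$ modulo $\Phi_d(q)$, giving $q^{d/2}\equiv -1$.

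Next I would compute $\prod_{r=0}^{d-1}(1+zq^r)$ modulo $\Phi_d(q)$. Since $\Phi_d(q)\mid[d]_q$ while $\Phi_d(q)$ is coprime to every $[j]_q$ with $1\le j\le d-1$, the coefficient $\qbinom{d}{k}q$ vanishes modulo $\Phi_d(q)$ for $0<k<d$. A one-line parity calculation using $q^{d/2}\equiv -1$ (when $d$ is even) yields $q^{\binom{d}{2}}\equiv(-1)^{d-1}$, and therefore
$$\prod_{r=0}^{d-1}(1+zq^r)\equiv 1+(-1)^{d-1}z^d\pmod{\Phi_d(q)}.$$

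Now I would split the index set $\{0,1,\dots,ad+b-1\}$ into the $a$ complete blocks $\{id,\dots,id+d-1\}$ for $0\le i<a$ plus the tail $\{ad,\dots,ad+b-1\}$. Using the shift $q^{id+r}\equiv q^r$, each complete block contributes the factor $1+(-1)^{d-1}z^d$, and the tail contributes $\sum_{l=0}^{b}q^{\binom{l}{2}}\qbinom{b}{l}q\,z^l$. So
$$\sum_{k=0}^{ad+b}q^{\binom{k}{2}}\qbinom{ad+b}{k}q\,z^k\equiv\bigl(1+(-1)^{d-1}z^d\bigr)^{\!a}\sum_{l=0}^{b}q^{\binom{l}{2}}\qbinom{b}{l}q\,z^l\pmod{\Phi_d(q)}.$$

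To finish I would compare the coefficient of $z^{hd+l}$ on both sides (the decomposition $k=hd+l$ with $0\le l\le d-1$ is unique because $l\le b\le d-1$). The left side gives $q^{\binom{hd+l}{2}}\qbinom{ad+b}{hd+l}q$ and the right side gives $\binom{a}{h}(-1)^{h(d-1)}q^{\binom{l}{2}}\qbinom{b}{l}q$. Writing $\binom{hd+l}{2}-\binom{l}{2}=hd(hd+2l-1)/2$ and reducing the exponent modulo $d$ using $q^d\equiv 1$ (and, for even $d$, $q^{d/2}\equiv -1$) shows that the factor $q^{\binom{hd+l}{2}-\binom{l}{2}}$ is exactly $(-1)^{h(d-1)}$ in every parity of $d$, so the two $\pm$ signs cancel and the claimed congruence falls out. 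The only genuine obstacle is this last sign-and-exponent bookkeeping in the even-$d$ case, which reduces to checking two subcases ($h$ even vs.\ $h$ odd); the rest is a direct application of the $q$-binomial theorem.
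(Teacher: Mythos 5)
The paper states this lemma without proof (it is the known $q$-Lucas theorem), so the only question is whether your argument actually establishes the statement as given. For $a\ge 0$ it does, and cleanly: the reduction $\prod_{r=0}^{d-1}(1+zq^r)\equiv 1+(-1)^{d-1}z^d\pmod{\Phi_d(q)}$ is justified exactly as you say (the middle $\qbinom{d}{k}q$ vanish because $\Phi_d(q)$ divides $[d]_q$ but no $[j]_q$ with $1\le j\le d-1$, and $q^{\binom d2}\equiv(-1)^{d-1}$), the block decomposition of $\{0,\dots,ad+b-1\}$ is correct, and the final bookkeeping does work out: $\binom{hd+l}{2}-\binom{l}{2}=hd(hd+2l-1)/2$, which for odd $d$ is a multiple of $d$ (matching $(-1)^{h(d-1)}=1$), and for even $d$ equals $\frac d2\cdot h(hd+2l-1)$ with $hd+2l-1$ odd, so $q^{d/2}\equiv-1$ gives exactly $(-1)^h$. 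Since $q$ is invertible modulo $\Phi_d(q)$, dividing out the powers of $q$ is legitimate.

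The genuine gap is the case of a negative upper entry. The lemma restricts only $b,l$ to $[0,d-1]$; $a$ and $h$ are arbitrary integers, and this generality is precisely what the paper uses: it rewrites $\qbinom{k+j}{j}q=(-1)^jq^{jk+\binom{j+1}2}\qbinom{-k-1}{j}q$ and then applies the lemma to $\qbinom{(-a-1)d+(d-b-1)}{sd+t}q$, i.e.\ with negative ``$a$''-part. Your proof hinges on splitting the finite index set $\{0,1,\dots,ad+b-1\}$ of the $q$-binomial product, which presupposes $ad+b\ge 0$, so the case $a<0$ is simply not covered (while $h<0$ or $h>a$ is harmless, as both sides vanish). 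The statement is still true there, and your argument can be completed, for instance via the reflection identities $\qbinom{-N}{k}q=(-1)^kq^{-Nk-\binom k2}\qbinom{N+k-1}{k}q$ and $\binom{-A}{h}=(-1)^h\binom{A+h-1}{h}$, which reduce negative upper entries to the case you proved after another round of exponent reduction modulo $\Phi_d(q)$; alternatively one can rerun your generating-function computation with the series $\sum_{k\ge0}\qbinom{N+k-1}{k}q z^k=\prod_{j=0}^{N-1}(1-zq^j)^{-1}$ in place of the finite product. As written, however, the proposal proves only the non-negative case, which is weaker than what the lemma asserts and than what the paper's Section 2 argument requires.
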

Define the generalized $q$-Ap\'ery polynomial
$$
A_k^{(\alpha)}(x;q)=\sum_{j=0}^{k}q^{\binom{j}{2}-jk}\qbinom{k}{j}{q}^\alpha\qbinom{k+j}{j}{q}^\alpha x^j.
$$
In order to prove (\ref{t1e1}), it suffices to show that
\begin{Thm}\label{qt1}
\begin{equation}\label{qt1e}\sum_{k=0}^{n-1}q^{n-1-k}[2k+1]_qA_k^{(\alpha)}(x;q)\equiv0\pmod{[n]_q}.
\end{equation}
\end{Thm}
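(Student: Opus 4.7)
The plan is to exploit the factorization $[n]_q=\prod_{d\mid n,\,d>1}\Phi_d(q)$ and the pairwise coprimality of the cyclotomic factors, reducing the congruence modulo $[n]_q$ to individual congruences modulo each $\Phi_d(q)$ with $d>1$ and $d\mid n$. Fix such a $d$, write $n=dt$, and decompose the outer index $k=ad+b$ with $0\le a\le t-1$, $0\le b\le d-1$; inside $A_k^{(\alpha)}(x;q)$ similarly decompose the summation index $j=cd+l$ with $0\le l\le d-1$. Applying Lemma \ref{qLucas} to each of $\qbinom{k}{j}{q}^\alpha$ and $\qbinom{k+j}{j}{q}^\alpha$ splits them into an ``$(a,c)$-part'' times a ``$(b,l)$-part''; the second binomial forces $b+l<d$ on the surviving terms (otherwise the relevant Lucas factor $\qbinom{b+l-d}{l}{q}$ vanishes because $l>b+l-d$). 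Using $q^d\equiv1\pmod{\Phi_d(q)}$ (and $q^{d/2}\equiv-1\pmod{\Phi_d(q)}$ when $d$ is even) one further reduces $q^{\binom{j}{2}-jk}$, producing a $q$-Lucas-type factorization
\[
A_{ad+b}^{(\alpha)}(x;q)\equiv A_a^{(\alpha)}(\varepsilon_d\,x^d)\cdot A_b^{(\alpha)}(x;q)\pmod{\Phi_d(q)},
\]
where $\varepsilon_d\in\{\pm1\}$ is a sign governed by the parity of $d$.

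Combined with the elementary congruences $q^{n-1-k}\equiv q^{d-1-b}\pmod{\Phi_d(q)}$ and $[2k+1]_q\equiv[2b+1]_q\pmod{\Phi_d(q)}$ (the latter since $[2k+1]_q-[2b+1]_q=q^{2b+1}[2ad]_q=q^{2b+1}[2a]_{q^d}[d]_q\equiv0$), the sum in (\ref{qt1e}) then factors as
\[
\sum_{k=0}^{n-1}q^{n-1-k}[2k+1]_q A_k^{(\alpha)}(x;q)\equiv\Bigl(\sum_{a=0}^{t-1}A_a^{(\alpha)}(\varepsilon_d\,x^d)\Bigr)\cdot S_d(x;q)\pmod{\Phi_d(q)},
\]
where $S_d(x;q):=\sum_{b=0}^{d-1}q^{d-1-b}[2b+1]_q A_b^{(\alpha)}(x;q)$. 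Hence the whole problem reduces to the base case $S_d(x;q)\equiv0\pmod{\Phi_d(q)}$ for every $d>1$.

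This base case is the main obstacle. A natural attack is to pair the index $b$ with $d-1-b$: a short calculation using $q^d\equiv1\pmod{\Phi_d(q)}$ yields
\[
q^{d-1-b}[2b+1]_q+q^b[2(d-1-b)+1]_q\equiv0\pmod{\Phi_d(q)},
\]
so after pairing one is left with
\[
S_d(x;q)\equiv\sum_{0\le b<d/2}q^{d-1-b}[2b+1]_q\bigl(A_b^{(\alpha)}(x;q)-A_{d-1-b}^{(\alpha)}(x;q)\bigr)\pmod{\Phi_d(q)},
\]
reducing matters to a reflection symmetry $A_b^{(\alpha)}(x;q)\equiv A_{d-1-b}^{(\alpha)}(x;q)\pmod{\Phi_d(q)}$ for $0\le b<d/2$. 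I would attack this symmetry by a further application of Lemma \ref{qLucas} combined with the identity $\qbinom{b}{l}{q}=\qbinom{b}{b-l}{q}$ and careful tracking of the $q$-exponents through the substitution $l\leftrightarrow b-l$. Should this direct route prove cumbersome, an alternative is to construct an explicit polynomial $T_n(x;q)$ witnessing the identity $\sum_{k=0}^{n-1}q^{n-1-k}[2k+1]_q A_k^{(\alpha)}(x;q)=[n]_q\,T_n(x;q)$, in the spirit of the Guo--Zeng formula for $\alpha=m=1$ recalled in the introduction.
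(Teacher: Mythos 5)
Your skeleton is sound as far as it goes and is close in spirit to the paper: the reduction to a congruence modulo $\Phi_d(q)$ for each divisor $d>1$, the decomposition $k=ad+b$, $j=cd+l$ with $q$-Lucas (and the observation that $\qbinom{b+l-d}{l}{q}=0$ forces $b+l<d$), the elementary congruences for $q^{n-1-k}$ and $[2k+1]_q$, and the coefficient cancellation $q^{d-1-b}[2b+1]_q+q^{b}[2(d-1-b)+1]_q\equiv0\pmod{\Phi_d(q)}$ are all correct. But the proposal is not a proof: everything is funneled into the ``base case'' symmetry $A_b^{(\alpha)}(x;q)\equiv A_{d-1-b}^{(\alpha)}(x;q)\pmod{\Phi_d(q)}$, which you leave open, and that symmetry is precisely the heart of the theorem, not a bookkeeping step. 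Moreover, the mechanism you name for it, $\qbinom{b}{l}{q}=\qbinom{b}{b-l}{q}$ with the substitution $l\leftrightarrow b-l$, acts inside the single polynomial $A_b$ (it swaps the coefficients of $x^l$ and $x^{b-l}$) and cannot by itself relate $A_b$ to $A_{d-1-b}$, whose $q$-binomials have the different upper entry $d-1-b$. What is actually needed is the upper-index reflection modulo $\Phi_d(q)$,
\begin{equation*}
\qbinom{b+l}{l}{q}\equiv(-1)^{l}q^{lb+\binom{l+1}{2}}\qbinom{d-b-1}{l}{q}\pmod{\Phi_d(q)},
\end{equation*}
which is exactly the paper's rewriting $\qbinom{k+j}{j}{q}=(-1)^{j}q^{jk+\binom{j+1}{2}}\qbinom{-k-1}{j}{q}$ followed by $q$-Lucas applied to $-k-1=(-a-1)d+(d-b-1)$; with it both $A_b$ and $A_{d-1-b}$ reduce to the same expression $\sum_t(\pm1)^{t}q^{\ast}\qbinom{b}{t}{q}^{\alpha}\qbinom{d-b-1}{t}{q}^{\alpha}x^{t}$, which is visibly symmetric under $b\mapsto d-1-b$. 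Your fallback (exhibiting an explicit $T_n(x;q)$) is a restatement of the theorem rather than a strategy.

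Two further cautions. First, the ``careful tracking of $q$-exponents'' is genuinely delicate: with the prefactor $q^{\binom{j}{2}-jk}$ not raised to the power $\alpha$, the reflected exponent picks up a term $(\alpha-1)l(2b+1)$ and the symmetry fails for $\alpha\ge2$ (e.g.\ $d=4$, $\alpha=2$: $A_1^{(2)}(x;q)\equiv1+2x$ while $A_2^{(2)}(x;q)\equiv1+2qx\pmod{\Phi_4(q)}$); the argument works when the exponent is $\alpha\bigl(\binom{j}{2}-jk\bigr)$, the normalization the paper's own identity $(-1)^{\alpha j}q^{\alpha j^{2}}\qbinom{k}{j}{q}^{\alpha}\qbinom{-k-1}{j}{q}^{\alpha}$ implicitly uses, so any completion of your plan must also fix this normalization. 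Second, once the reflection identity is in hand your route essentially coincides with the paper's; the structural difference is that the paper never needs the multiplicative factorization in the $a$-variable at all: it pairs $k$ with $n-1-k$ over the whole range, so the coefficient cancellation occurs for each pair $(a,b)$ with the common factor $B_{a,b,d}^{(\alpha)}(x;q)^{m}$ untouched, which is also why the $m$-th powers come along for free there.
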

Let us explain why (\ref{qt1e}) implies (\ref{t1e1})). Since $[n]_q$ is a primitive polynomial (i.e., the greatest divisor of all coefficients of $[n]_q$ is 1), by the Gauss lemma, there exists a polynomial $H(x,q)$ with integral coefficients such that
\begin{equation}\label{qconH}
\sum_{k=0}^{n-1}q^{n-1-k}[2k+1]_qA_k^{(\alpha)}(x;q)=[n]_qH(x,q).
\end{equation}
Substituting $q=1$ in (\ref{qconH}), we get (\ref{t1e1}).

It is not difficult to check that
$$
[n]_q=\prod_{\substack{d\mid n\\ d>1}}\Phi_d(q).
$$
The advantage of $q$-congruences is that we only need to prove that
$$
\sum_{k=0}^{n-1}q^{n-1-k}[2k+1]_qA_k^{(\alpha)}(x;q)\equiv0\pmod{\Phi_d(q)}
$$
for every divisor $d>1$ of $n$. Note that
\begin{align*}
\qbinom{k+j}{j}{q}=&\frac{(1-q^{k+1})(1-q^{k+2})\cdots(1-q^{k+j})}{(1-q)(1-q^2)\cdots(1-q^j)}\\=&(-1)^j\frac{q^{jk+\binom{j+1}2}(1-q^{-k-1})(1-q^{-k-2})\cdots(1-q^{-k-j})}{(1-q)(1-q^2)\cdots(1-q^j)}\\
=&(-1)^jq^{jk+\binom{j+1}2}\qbinom{-k-1}{j}q.
\end{align*}
So
$$
A_k^{(\alpha)}(x;q)=\sum_{-\infty<j<+\infty}(-1)^{\alpha j}q^{\alpha j^2}\qbinom{k}{j}{q}^\alpha\qbinom{-k-1}{j}{q}^\alpha x^j.
$$
Suppose that $d>1$ is a divisor of $n$. Let $h=n/d$. Write $k=ad+b$ where $0\leq b\leq d-1$. Then by Lemma \ref{qLucas},
\begin{align*}
&A_{ad+b}^{(\alpha)}(x;q)\\
=&\sum_{\substack{-\infty<s<+\infty\\ 0\leq t\leq d-1}}(-1)^{\alpha(sd+t)}q^{\alpha (sd+t)^2}\qbinom{ad+b}{sd+t}{q}^\alpha\qbinom{(-a-1)d+d-b-1}{sd+t}{q}^\alpha x^{sd+t}\\
\equiv&\sum_{\substack{-\infty<s<+\infty\\ 0\leq t\leq d-1}}(-1)^{\alpha(sd+t)}q^{\alpha t^2}\binom{a}{s}^\alpha\qbinom{b}{t}{q}^\alpha\binom{-a-1}{s}^\alpha\qbinom{d-b-1}{t}{q}^\alpha x^{sd+t}\pmod{\Phi_d(q)}.
\end{align*}
Hence,
\begin{align*}\sum_{k=0}^{n-1}[2k+1]_qq^{n-1-k}A_k^{(\alpha)}(x;q)^m
=&\sum_{\substack{0\leq a\leq h-1\\ 0\leq b\leq d-1}}q^{hd-1-ad-b}[2ad+2b+1]_qA_{ad+b}^{(\alpha)}(x;q)^m\\
\equiv&\sum_{\substack{0\leq a\leq h-1\\ 0\leq b\leq d-1}}q^{-1-b}[2b+1]_qB_{a,b,d}^{(\alpha)}(x;q)^m\pmod{\Phi_d(q)},
\end{align*}
where
$$
B_{a,b,d}^{(\alpha)}(x;q)=\sum_{\substack{-\infty<s<+\infty\\ 0\leq t\leq d-1}}(-1)^{\alpha(sd+t)}q^{\alpha t^2}\binom{a}{s}^\alpha\qbinom{b}{t}{q}^\alpha\binom{-a-1}{s}^\alpha\qbinom{d-b-1}{t}{q}^\alpha x^{sd+t}.
$$
Similarly, since $k=ad+b\Longleftrightarrow n-k-1=(h-a-1)d+(d-b-1)$ and $B_{a,b,d}^{(\alpha)}(x;q)=B_{a,d-b-1,d}^{(\alpha)}(x;q)$, we have
\begin{align*}\sum_{k=0}^{n-1}q^k[2n-2k-1]_qA_{n-k-1}^{(\alpha)}(x;q)^m\equiv&\sum_{\substack{0\leq a\leq h-1\\ 0\leq b\leq d-1}}q^{b}[-2b-1]_qB_{h-a-1,d-b-1,d}^{(\alpha)}(x;q)^m\\
\xlongequal{a'=h-a-1}&\sum_{\substack{0\leq a'\leq h-1\\ 0\leq b\leq d-1}}q^{b}[-2b-1]_qB_{a',b,d}^{(\alpha)}(x;q)^m\pmod{\Phi_d(q)}.\end{align*}
Note that
\begin{align*}
q^{-1-b}[2b+1]_q+q^{b}[-2b-1]_q=q^{-1-b}-q^{b}+q^{b}-q^{-b-1}=0.
\end{align*}
Therefore,
\begin{align*}&2\sum_{k=0}^{n-1}q^{n-1-k}[2k+1]_qA_k^{(\alpha)}(x;q)^m\\
=&\sum_{k=0}^{n-1}q^{n-1-k}[2k+1]_qA_k^{(\alpha)}(x;q)^m+\sum_{k=0}^{n-1}q^{k}[2n-2k-1]_qA_{n-1-k}^{(\alpha)}(x;q)^m\\
\equiv&0\pmod{\Phi_d(q)}.
\end{align*}
This concludes the proof of Theorem \ref{qt1}.

\section{Proof of (\ref{t1e2})}
\setcounter{Lem}{0}\setcounter{Thm}{0}\setcounter{Cor}{0}
\setcounter{equation}{0}
The proof of (\ref{t1e2}) is a little complicated.
\begin{Thm}
$$\sum_{k=0}^{n-1}(-1)^kq^{n-1-k}[2k+1]_qA_k^{(\alpha)}(x;q^2)$$
is divisible by
$$\prod_{\substack{d\mid n\\ d>1\text{ is odd}}}\Phi_d(q)\cdot\prod_{\substack{d\mid n\\ d\text{ is even}}}\Phi_d(q^2).
$$
\end{Thm}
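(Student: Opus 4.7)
The strategy parallels Section 2, with two modifications: the $q$-Lucas congruence is applied at base $q^2$ (since $A_k^{(\alpha)}(x;q^2)$ involves $q$-binomials in $q^2$), and one must split the argument by the parity of $d$ because of the subtler behavior of $q^d$ modulo $\Phi_d(q^2)$ for even $d$ combined with the $(-1)^k$ factor. Setting $S=\sum_{k=0}^{n-1}(-1)^kq^{n-1-k}[2k+1]_qA_k^{(\alpha)}(x;q^2)^m$, I first observe that the target divisor is a product of pairwise coprime cyclotomic polynomials in $q$: one has $\Phi_d(q^2)=\Phi_{2d}(q)$ for even $d$, and the index sets $\{d:d\mid n,\ d>1,\ d\text{ odd}\}$ and $\{2d:d\mid n,\ d\text{ even}\}$ are disjoint (odd integers $>1$ versus multiples of $4$). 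Hence it suffices to prove $S\equiv 0\pmod{\Phi_d(q)}$ for every odd $d>1$ dividing $n$, and $S\equiv 0\pmod{\Phi_d(q^2)}$ for every even $d$ dividing $n$.

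For each such $d$, write $k=ad+b$ with $0\le b\le d-1$ and set $h=n/d$. Applying Lemma \ref{qLucas} with $q$ replaced by $q^2$ gives $A_{ad+b}^{(\alpha)}(x;q^2)\equiv B_{a,b,d}^{(\alpha)}(x;q^2)\pmod{\Phi_d(q^2)}$, where $B_{a,b,d}^{(\alpha)}(x;q^2)$ is obtained from the polynomial $B$ of Section 2 by replacing $q$ throughout by $q^2$. For odd $d$ the factorization $\Phi_d(q^2)=\Phi_d(q)\Phi_{2d}(q)$ lets this congruence descend to $\Phi_d(q)$. The quantitative difference between the two parity cases is the reduction of pure $q$-powers: modulo $\Phi_d(q)$ with $d$ odd one has $q^d\equiv 1$, whereas modulo $\Phi_d(q^2)=\Phi_{2d}(q)$ with $d$ even one has $q^d\equiv -1$. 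Consequently $q^{n-1-k}=q^{hd-1-ad-b}$ reduces to $q^{-1-b}$ in the odd case and to $(-1)^{h+a}q^{-1-b}$ in the even case, while $[2k+1]_q\equiv[2b+1]_q$ holds in both (because $q^{2ad}\equiv 1$ modulo both $\Phi_d(q)$ and $\Phi_d(q^2)$, the latter dividing $q^{2d}-1$).

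The proof now follows the template of Section 2: pair $k$ with $n-1-k=(h-a-1)d+(d-b-1)$, invoke the symmetry $B_{a,b,d}^{(\alpha)}(x;q^2)=B_{a,d-b-1,d}^{(\alpha)}(x;q^2)$ (immediate from the definition by swapping the two $q$-binomial factors), and substitute $a'=h-a-1$ in the reflected expression. After these manipulations $S$ is represented modulo the relevant cyclotomic factor in two ways, whose sum is a common nonzero scalar multiple of
$$
\sum_{a,b}(-1)^{a+b}\bigl(q^{-1-b}[2b+1]_q+q^b[-2b-1]_q\bigr)\bigl(B_{a,b,d}^{(\alpha)}(x;q^2)\bigr)^m,
$$
which vanishes by the identity already used in Section 2. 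I expect the main obstacle to be the careful bookkeeping of signs: in the odd-$d$ case one uses $(-1)^{n-1}=-(-1)^h$ (since $n=hd$ shares the parity of $h$), whereas in the even-$d$ case $n$ is automatically even and one must properly absorb the factor $(-1)^{h+a}$ arising from $q^{hd}\equiv(-1)^h$ together with $q^{ad}\equiv(-1)^a$. In both cases the various sign contributions conspire to collapse, so that the two representations of $S$ differ only by the substitution $q^{-1-b}[2b+1]_q\leftrightarrow q^b[-2b-1]_q$, at which point the Section 2 identity closes the argument.
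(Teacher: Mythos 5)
Your proposal is correct and follows essentially the same route as the paper's proof: the $q$-Lucas congruence applied at base $q^2$, the relation between $\Phi_d(q)$, $\Phi_d(q^2)$ and $\Phi_{2d}(q)$ split by the parity of $d$, the pairing of $k$ with $n-1-k$ using the symmetry $B_{a,b,d}^{(\alpha)}=B_{a,d-b-1,d}^{(\alpha)}$ and the substitution $a'=h-a-1$, and the final cancellation via $q^{-1-b}[2b+1]_q+q^b[-2b-1]_q=0$, with your sign bookkeeping ($q^d\equiv-1$ modulo $\Phi_{2d}(q)$, the factor $(-1)^h$) matching the paper's termwise computation. The only differences are cosmetic: you make explicit the pairwise coprimality of the cyclotomic factors and use the full factorization $\Phi_d(q^2)=\Phi_d(q)\Phi_{2d}(q)$ for odd $d$, where the paper only records the divisibility $\Phi_d(q)\mid\Phi_d(q^2)$.
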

Clearly we only need to prove that
$$
\sum_{k=0}^{n-1}(-1)^kq^{n-1-k}[2k+1]_qA_k^{(\alpha)}(x;q^2)^m+\sum_{k=0}^{n-1}(-1)^{n-1-k}q^{k}[2n-2k-1]_qA_{n-1-k}^{(\alpha)}(x;q^2)^m.
$$
is divisible by $\Phi_d(q)$ for odd $d>1$ and by $\Phi_d(q^2)$ for even $d$ respectively.
\begin{Lem} If $d>1$ is odd, then $\Phi_d(q)$ divides $\Phi_{d}(q^2)$. If $d$ is even, then $\Phi_d(q^2)=\Phi_{2d}(q)$.
\end{Lem}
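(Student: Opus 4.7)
The plan is to work directly with roots in $\C$, using the fact that $\Phi_d(q)$ is the monic polynomial in $\Z[q]$ whose roots are precisely the primitive $d$-th roots of unity. Both claims then reduce to a short calculation of the order of an element $\zeta$ having the property that $\zeta^2$ is a primitive $d$-th root of unity.

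For odd $d>1$, the key point is that $\gcd(2,d)=1$, so the map $\zeta\mapsto\zeta^2$ permutes the primitive $d$-th roots of unity. Hence for every primitive $d$-th root $\zeta$, the square $\zeta^2$ is again a primitive $d$-th root of unity, and consequently $\Phi_d(\zeta^2)=0$. This says every root of $\Phi_d(q)$ is also a root of $\Phi_d(q^2)$; since $\Phi_d$ is squarefree, this gives $\Phi_d(q)\mid\Phi_d(q^2)$ in $\Q[q]$, and Gauss's lemma (both polynomials are monic with integer coefficients) upgrades the divisibility to $\Z[q]$.

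For even $d$, I would show that the roots of $\Phi_d(q^2)$ are exactly the primitive $2d$-th roots of unity; combined with a degree count, this forces $\Phi_d(q^2)=\Phi_{2d}(q)$. If $\zeta$ has order $2d$, then $\zeta^2$ has order $2d/\gcd(2,2d)=d$, so $\Phi_d(\zeta^2)=0$. Conversely, suppose $\zeta^2$ is a primitive $d$-th root of unity and let $m=\operatorname{ord}(\zeta)$, so that $m/\gcd(2,m)=d$; if $m$ were odd, this would force $m=d$, contradicting the parity of $d$, so $m$ must be even and $m=2d$. Hence both monic polynomials have the same (simple) roots, and the identity $\phi(2d)=2\phi(d)$ (valid for even $d$) yields $\deg\Phi_d(q^2)=2\phi(d)=\phi(2d)=\deg\Phi_{2d}(q)$, so equality follows.

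I do not anticipate any serious obstacle here: the result is a classical specialization (with $p=2$) of the standard identities $\Phi_n(q^p)=\Phi_{np}(q)$ when $p\mid n$ and $\Phi_n(q^p)=\Phi_{np}(q)\Phi_n(q)$ when $p\nmid n$. The only real care is the parity-based degree bookkeeping carried out above.
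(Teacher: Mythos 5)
Your argument is correct and follows essentially the same route as the paper: for odd $d$ you observe that squaring permutes the primitive $d$-th roots of unity so every root of $\Phi_d(q)$ is a root of $\Phi_d(q^2)$, and for even $d$ you show $\Phi_{2d}(q)$ and $\Phi_d(q^2)$ share their roots and conclude by the degree count $\phi(2d)=2\phi(d)$. The extra details you supply (the converse order computation and the Gauss-lemma remark, which is in fact unnecessary since $\Phi_d$ is monic) only elaborate on the paper's shorter proof.
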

\begin{proof}
We know that for $d>1$,
$$
\Phi_d(q)=\prod_{\xi\text{ is }d\text{-th primitive root of unity}}(q-\xi).
$$
Suppose that $d$ is odd and $\xi$ is an arbitrary $d$-th primitive root of unity. Then $\xi^2$ also is a $d$-th primitive root of unity, i.e., $\Phi_d(\xi^2)=0$. Hence $\Phi_d(q)$ divides $\Phi_{d}(q^2)$. Similarly, if $d$ is even and $\xi$ is a $2d$-th primitive root of unity, then $\xi^2$ is a $d$-th primitive root of unity. So $\Phi_{2d}(q)$ divides $\Phi_d(q^2)$. Note that now $\deg\Phi_{2d}=\phi(2d)=2\phi(d)=2\deg\Phi_d$, where $\phi$ is the Euler totient function. We must have $\Phi_d(q^2)=\Phi_{2d}(q)$.
\end{proof}

Suppose that $d>1$ is an odd divisor of $n$. Let $h=n/d$. Then
\begin{align*}
&\sum_{k=0}^{n-1}(-1)^{k}q^{n-1-k}[2k+1]_qA_{k}^{(\alpha)}(x;q^2)^m\\
\equiv&\sum_{\substack{0\leq a\leq h-1\\ 0\leq b\leq d-1}}(-1)^{ad+b}q^{hd-1-ad-b}[2(ad+b)+1]_qB_{a,b,d}^{(\alpha)}(x;q^2)^m\pmod{\Phi_d(q^2)}\\
\equiv&\sum_{\substack{0\leq a\leq h-1\\ 0\leq b\leq d-1}}(-1)^{ad+b}q^{-1-b}[2b+1]_qB_{a,b,d}^{(\alpha)}(x;q^2)^m\pmod{\Phi_d(q)}.
\end{align*}
and
\begin{align*}
&\sum_{k=0}^{n-1}(-1)^{n-1-k}[2n-2k-1]_qq^{k}A_{n-1-k}^{(\alpha)}(x;q^2)^m\\
\equiv&\sum_{\substack{0\leq a\leq h-1\\ 0\leq b\leq d-1}}(-1)^{hd-1-ad-b}q^{ad+b}[2sd-2(ad+b)-1]_qB_{h-a-1,d-b-1,d}^{(\alpha)}(x;q^2)^m\\
\equiv&\sum_{\substack{0\leq a'\leq h-1\\ 0\leq b\leq d-1}}(-1)^{a'd+d-1-b}[-2b-1]_qq^{b}B_{a',b,d}^{(\alpha)}(x;q^2)^m\pmod{\Phi_d(q)}.
\end{align*}
Since $d$ is odd,
$$
(-1)^{ad+b}q^{-1-b}[2b+1]_q+(-1)^{ad+d-1-b}q^{b}[-2b-1]_q=0.$$
So
$\Phi_d(q)$ divides
$$
\sum_{k=0}^{n-1}(-1)^kq^{n-1-k}[2k+1]_qA_k^{(\alpha)}(x;q^2)^m+\sum_{k=0}^{n-1}(-1)^{n-1-k}q^{k}[2n-2k-1]_qA_{n-1-k}^{(\alpha)}(x;q^2)^m.
$$

Suppose that $d$ is an even divisor of $n$.
Then\begin{align*}&\sum_{k=0}^{n-1}(-1)^kq^{n-1-k}[2k+1]_qA_k^{(\alpha)}(x;q^2)^m\\
\equiv&\sum_{\substack{0\leq a\leq h-1\\ 0\leq b\leq d-1}}(-1)^{ad+b}q^{hd-1-(ad+b)}[2(ad+b)+1]_qB_{a,b,d}^{(\alpha)}(x;q^2)^m\\
\equiv&\sum_{\substack{0\leq a\leq h-1\\ 0\leq b\leq d-1}}(-1)^{ad+b}q^{hd-ad-1-b}[2b+1]_qB_{a,b,d}^{(\alpha)}(x;q^2)^m\pmod{\Phi_d(q^2)}.
\end{align*}
And
\begin{align*}&\sum_{k=0}^{n-1}(-1)^{n-1-k}q^{k}[2n-2k-1]_qA_{n-1-k}^{(\alpha)}(x;q^2)^m\\
\equiv&\sum_{\substack{0\leq a\leq h-1\\ 0\leq b\leq c-1}}(-1)^{hd-1-(ad+b)}q^{ad+b}[2hd-2(ad+b)-1]_qB_{h-a-1,d-b-1,d}^{(\alpha)}(x;q^2)^m\\
\equiv&\sum_{\substack{0\leq a'\leq h-1\\ 0\leq b\leq d-1}}(-1)^{a'd+d-1-b}q^{hd-a'd-d+b}[-2b-1]_qB_{a',b,d}^{(\alpha)}(x;q^2)^m\pmod{\Phi_d(q^2)}.
\end{align*}
Note that
$\Phi_d(q^2)=\Phi_{2d}(q)$ divides $1+q^d=(1-q^{2d})/(1-q^d)$, i.e., $$q^d\equiv -1\pmod{\Phi_d(q^2)}.$$ We have
\begin{align*}
&(-1)^{ad+b}q^{hd-ad-1-b}[2b+1]_q+(-1)^{ad+d-1-b}q^{hd-ad-d+b}[-2b-1]_q\\
\equiv&(-1)^{ad+b}q^{hd-ad}(q^{-1-b}[2b+1]_q+q^{b}[-2b-1]_q)\\
=&0\pmod{\Phi_d(q^2)}.
\end{align*}
That is, $\Phi_d(q^2)$ divides
$$
\sum_{k=0}^{n-1}(-1)^kq^{n-1-k}[2k+1]_qA_k^{(\alpha)}(x;q^2)^m+\sum_{k=0}^{n-1}(-1)^{n-1-k}q^{k}[2n-2k-1]_qA_{n-1-k}^{(\alpha)}(x;q^2)^m.
$$
\begin{Ack} I am grateful to Professor Zhi-Wei Sun for his helpful suggestions on this paper.
\end{Ack}

\end{document}